\theoremstyle{plain}
\newtheorem{theorem}{Theorem}[section]
\newtheorem{proposition}[theorem]{Proposition}
\newtheorem{lemma}[theorem]{Lemma}
\theoremstyle{definition}
\newtheorem{definition}[theorem]{Definition}
 \DeclareMathOperator{\re}{Re\,}
\newcommand{\eps}{\varepsilon}
\renewcommand{\leq}{\leqslant}
\renewcommand{\geq}{\geqslant}
\renewcommand{\leq}{\leqslant}
\renewcommand{\ge}{\geqslant}
\begin{document}

\title[The Bishop-Phelps-Bollob\'as property for operators from $\mathcal{C}(K)$]{The Bishop-Phelps-Bollob\'as property for operators from $\mathcal{C}(K)$  to uniformly convex spaces}

\author[Kim]{Sun Kwang Kim}
\address[Kim]{Department of Mathematics, Kyonggi University, Suwon 443-760, Republic of Korea}
\email{\texttt{sunkwang@kgu.ac.kr}}

\author[Lee]{Han Ju Lee}
\address[Lee]{Department of Mathematics Education,
Dongguk University - Seoul, 100-715 Seoul, Republic of Korea}
\email{\texttt{hanjulee@dongguk.edu}}

\thanks{The second named author partially supported by Basic Science Research Program through the National Research Foundation of Korea(NRF) funded by the Ministry of Education, Science and Technology (NRF-2012R1A1A1006869).}

\subjclass[2000]{Primary 46B20; Secondary 46B04, 46B22}

\keywords{Banach space, approximation, Bishop-Phelps-Bollob\'{a}s theorem.}

\begin{abstract} We show that the pair $(C(K),X)$ has the Bishop-Phelps-Bolloba\'as property for operators if $K$ is a compact Hausdorff space and $X$ is a uniformly convex space.
\end{abstract}

\date{June 26th, 2014}

\maketitle

\section{Introduction}
In this paper, we deal with strengthening of the famous Bishop-Phelps theorem. In 1961, Bishop and Phelps \cite{BP} showed that the set of all norm attaining functionals on a Banach space $X$ is dense in its dual space $X^*$ which is now called Bishop-Phelps theorem. This theorem has been extended to operators between Banach spaces $X$ and $Y$. In general, the set of norm attaining operators $\mathcal{NA}(X,Y)$ is not dense in the space of linear operators $\mathcal{L}(X,Y)$. However, it is true for some pair of Banach spaces $(X,Y)$. One of very well-known examples is the pair of every reflexive Banach space $X$ and every Banach space $Y$, which was shown by Lindenstrauss \cite{L}. After that, this was generalized by Bourgain to Banach space $X$ with Radon-Nikod\'ym property \cite {Bou}, and also there have been many efforts to find other positive examples \cite{CKlush, CK2, CLS1, FP, JoWo, S, U}.

Meanwhile, Bollob\'as sharpened Bishop-Phelps theorem as follows. From now on, the unit ball and the unit sphere of a Banach space $X$ will be denoted by $B_X$ and $S_X$, respectively.

\begin{theorem}(\cite{Bol})
For an arbitrary $\epsilon>0$, if $x^*\in S_{X^*}$ satisfies $|1-x^*(x)|<{{\epsilon^2}\over{4}}$ for $x\in B_X$, then there are both $y\in S_X$ and $y^*\in S_{X^*}$ such that $y^*(y)=1$, $\|y-x\|<\epsilon$ and $\|y^*-x^*\|<\epsilon$.
\end{theorem}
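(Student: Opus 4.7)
My plan is to prove the statement by a Bishop-Phelps style support-cone argument. The point $y$ will arise as a maximal element in $B_X$ with respect to an order defined by a support cone at $x^*$, and the functional $y^*$ will be produced by Hahn-Banach separation at $y$.

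Set $\delta = \epsilon^2/4$, so by hypothesis $\re x^*(x) > 1 - \delta$. Fix a parameter $\eta > 0$ controlling the sharpness of the cone; a balance of error terms will eventually lead me to choose $\eta = \epsilon/2$. Define
$$K = \{z \in X : \eta \|z\| \leq \re x^*(z)\},$$
a closed convex cone with vertex at the origin, and introduce the partial order $u \preceq v \Longleftrightarrow v - u \in K$. Along any chain in $\{z \in B_X : x \preceq z\}$ the cone inequality forces $\eta \|u - v\| \leq \re x^*(v) - \re x^*(u)$ whenever $u \preceq v$; since $\re x^*$ is bounded above by $1$ on $B_X$, every chain is norm-Cauchy and hence converges to an element of $B_X$ which still dominates $x$. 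Zorn's lemma therefore supplies a maximal element $y$.

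Since $y - x \in K$ and $\re x^*(y - x) \leq 1 - \re x^*(x) < \delta$, I get $\|y - x\| < \delta/\eta = \epsilon/2$. Moreover $\|y\|$ must equal $1$: if $\|y\| < 1$, picking any nonzero $w \in K$ (for instance a near-norming element for $x^*$) and adding a small positive multiple of $w$ to $y$ would keep us inside $B_X$ and strictly above $y$, contradicting maximality. To manufacture $y^*$, I separate the convex set $y + K$ from the open unit ball $\{z : \|z\| < 1\}$; maximality guarantees disjointness (indeed $(y + K) \cap B_X = \{y\}$), and the resulting functional, after normalization, is a $y^* \in S_{X^*}$ with $y^*(y) = 1$ and $\re y^* \geq 0$ on $K$.

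The main obstacle I foresee is converting the cone-positivity $\re y^*|_{K} \geq 0$ into the norm bound $\|y^* - x^*\| < \epsilon$. The plan is to test $x^* - y^*$ on vectors lying on or near the boundary of $K$: given any unit vector $u$ with $\re(x^* - y^*)(u) > \eta$, a suitable combination of $u$ with a near-norming vector for $x^*$ should produce an element of $K$ on which $\re y^*$ is strictly negative, contradicting the separation. This yields $\|y^* - x^*\| \leq \eta = \epsilon/2 < \epsilon$. In the complex case, passing from real-part estimates to modulus estimates requires the usual rotation $\lambda \in \T$, which is routine once the real-linear bound is established.
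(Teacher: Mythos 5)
The paper itself gives no proof of this statement --- it is quoted from Bollob\'as --- so your proposal has to be judged against the standard argument, which is indeed the support-cone route you take. The skeleton is right: the cone $K=\{z:\eta\|z\|\le \re x^*(z)\}$, Zorn's lemma on the induced order to get a maximal $y$ with $\|y-x\|\le \delta/\eta$ and $\|y\|=1$, and separation of $y+K$ from the open unit ball to get $y^*\in S_{X^*}$ with $y^*(y)=1$ and $\re y^*\ge 0$ on $K$. All of that is correct (the only point needing a word is that chains may be uncountable, but monotonicity of $\re x^*$ along a chain handles this in the usual way).

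The genuine gap is the last step. Cone-positivity of $y^*$ does \emph{not} imply $\|y^*-x^*\|\le\eta$; the correct conclusion (Phelps's lemma) is $\|y^*-x^*\|\le 2\eta$, and the factor $2$ is essentially sharp. Concretely, take $X=\ell_\infty^2$, $x^*=(1,0)\in\ell_1^2$: then $K=\{(a,b):a\ge \eta|b|\}$, and $y^*=(1-d,d)$ with $d=\eta/(1+\eta)$ is a norm-one functional, nonnegative on $K$, with $\|y^*-x^*\|_1=2\eta/(1+\eta)>\eta$. So your proposed contradiction scheme --- ``if $\re(x^*-y^*)(u)>\eta$ for some unit $u$, build an element of $K$ where $\re y^*<0$'' --- cannot work, because such $u$ genuinely exist; no choice of auxiliary vector will rescue it. The clean way to get the right bound is dual: $y^*\ge 0$ on $K$ forces $y^*=\lambda(x^*-\eta h)$ for some $\lambda\ge 0$, $\|h\|\le 1$, whence $\|y^*-x^*\|\le \eta+|1-1/\lambda|\cdot\lambda\|x^*-\eta h\|/\|x^*-\eta h\|\le 2\eta$ after normalizing. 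The good news is that the theorem survives the repair with your own parameter choice: with $\eta=\epsilon/2$ one gets $\|y-x\|\le (\epsilon^2/4)/(\epsilon/2)=\epsilon/2<\epsilon$ and $\|y^*-x^*\|\le 2\eta=\epsilon$, and the strict inequality in the hypothesis $|1-x^*(x)|<\epsilon^2/4$ lets you run the argument with a slightly smaller $\epsilon'$ to make both conclusions strict. But as written, the key functional estimate is false and the method you sketch for proving it would fail.
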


 This Bishop-Phelps-Bollob\'as theorem shows that if a functional almost attains its norm at a point, then it is possible to approximate simultaneously both the functional and the point by norm attaining functionals and their corresponding norm attaining points. Clearly, Bishop-Phelps-Bollob\'as theorem implies Bishop-Phelps theorem.

 Similarly to the case of Bishop-Phelps theorem, Acosta, Aron, Garc\'ia and Maestre \cite{AAGM2} started to extend this theorem to bounded linear operators between Banach spaces and introduced the new notion  \emph{Bishop-Phelps-Bollob\'as property}.

\begin{definition} (\cite[Definition 1.1]{AAGM2}) Let $X$ and $Y$ be Banach spaces. We say that the pair $(X, Y)$ has the Bishop-Phelps-Bollob\'as property for operators (\emph{BPBp}) if, given $\epsilon >0$,  there exists $\eta(\epsilon)>0$  such that if there exist both $T\in S_{\mathcal{L}(X,Y)}$ and $x_0 \in S_X$ satisfying $\|Tx_0\|>1-\eta (\epsilon)$, then there exist both an operator $S\in S_{\mathcal{L}(X,Y)}$ and $u_0\in S_X$ such that
$$\|Su_0\|=1, \|x_0 - u_0\|<\epsilon ~\mbox{and}~ \|T-S\|<\epsilon.$$
\end{definition}

 Acosta et al. showed \cite{AAGM2} that the pair $(X,Y)$ has the \emph{BPBp} for finite dimensional Banach spaces $X$ and $Y$, and that the pair $(\ell_{\infty}^n, Y)$ has the \emph{BPBp} for every $n$ if $Y$ is a uniformly convex space.  In the same paper, they asked if the pairs $(c_0, Y)$ and $(\ell_\infty, Y)$ have the BPBp for uniformly convex spaces $Y$. The first author solved the $c_0$ case and proved  \cite{Kim} that  $(c_0, Y)$ have the Bishop-Phelps-Bollob\'as property for all uniformly convex spaces $Y$.

Let $X=L_\infty(\mu)$ or $X=c_0(\Gamma)$ for a set $\Gamma$. Very recently, Lin and authors \cite{Lin} proved that $(X, Y)$ has the BPBp for every uniformly convex space $Y$. So $(L_\infty(\mu), L_p(\nu))$ has the BPBp for all $1<p<\infty$ and for all measures $\nu$. They also proved that $(X, Y)$, as a pair of complex spaces, has the BPBp for every uniformly complex convex space $Y$. In particular, $(L_\infty(\mu), L_1(\nu))$, as a pair of complex spaces, has the BPBp, since $L_1(\nu)$ is uniformly complex convex \cite{Glo}.  

On the other hand, there have been several researches about the BPBp for operators into $C(K)$ spaces (or uniform algebras).
Even though Schachermayer showed \cite{S} that the set of norm attaining operators is not dense in $\mathcal{L}(L_1[0,1], C[0,1])$,  there are some positive results about the BPBp. It is shown \cite{ACK} that $(X, C(K))$ has the BPBp if $X$ is an Asplund space.  This result was extended so that $(X, A)$ has the BPBp if $X$ is Asplund and $A$ is a uniform algebra \cite{CasGuiKad}. The authors also proved \cite{KL} that $(X, C(K))$ has the BPBp if $X^*$ admits a uniformly simultaneously continuous retractions. It is also worthwhile to remark that the pair $(C(K), C(L))$ of the spaces of real-valued continuous functions has the BPBp for every compact Hausdorff spaces $K$ and $L$ \cite{ABCCKLLM}. Concerning the results about $L_\infty$ spaces, it is shown \cite{ACGM} that $(L_1(\mu), L_\infty[0,1])$ has the BPBp and this was generalized \cite{CKLM} so that $(L_1(\mu), L_\infty(\nu))$ has  the BPBp if $\mu$ is any measure and $\nu$ is a localizable measure. These are the strengthening of the results that the set of norm-attaining operators is dense in $\mathcal{L}(L_1(\mu), L_\infty(\nu))$  \cite{FP, PS} for every measure $\mu$ and every localizable measure $\nu$. Finally we remark that if $X$ is uniformly convex, then $(X, Y)$ has the BPBp for every Banach space $Y$ \cite{ABGM, ACKLM, KL2}.

Throughout this paper, we consider only real Banach spaces. It is the main result of this paper that  $(C(K), X)$ has the \emph{BPBp} for every compact Hausdorff space $K$ and for every uniformly convex space $X$. Recall that Schachermayer showed  \cite{S} that every weakly compact operator from $C(K)$ into a Banach space can be approximated by norm attaining weakly compact operators (cf. \cite[Theorem 2]{ACKP}). So the set of all norm attaining operators is dense in $\mathcal{L}(C(K),Y)$ for every reflexive space $Y$.  Notice that the reflexivity of $Y$ is not sufficient to prove that $(C(K), Y)$ has the BPBp. Indeed, if we take a reflexive strictly convex space $Y_0$ which is not uniformly convex, then $(\ell_1^{(2)}, Y_0)$ does not have the BPBp \cite{AAGM2, ACKLM}. If we take $K_0$ as the set consisting of only two points, then $C(K_0)$ is isometrically isomorphic to 2-dimensional $\ell_1^{(2)}$ space. Hence $(C(K_0), Y_0))$ does not have the BPBp. However, if $X$ is uniformly convex, then it will be shown that $(C(K), X)$ has the BPBp.

 \section{Main Result}

Given a Banach space $X$, the modulus of convexity $\delta_X(\epsilon)$ of the unit ball $B_X$ is defined by for $0<\eps<1$,
\[ \delta_X(\epsilon) = \inf \left\{ 1-\left\| \frac{x+y}2 \right\|:  x , y\in B_X, \|x-y\|\ge \epsilon \right\}. \]
A Banach space $X$ is said to be uniformly convex if $\delta_X(\epsilon)>0$ for all $0<\epsilon<1$. It is well known that every uniformly convex space is reflexive.

In \cite{Kim}, the following result was shown:
Let $1>\epsilon>0$ be given and $X$ be a reflexive Banach space and $Y$ be a uniformly convex Banach space with modulus of convexity $\delta_X\left(\epsilon\right)>0$. If $T\in S_{\mathcal{L}\left(X,Y\right)}$ and $x_1\in S_X$ satisfy $$\left\|Tx_1\right\|>1-{{\epsilon}\over{ 2^5}}\delta_X\left({{\epsilon}\over{2}}\right),$$ then there exist $S\in S_{\mathcal{L}\left(X,Y\right)}$ and $x_2\in S_X$ such that $\left\|Sx_2\right\|=1$, $\left\|S-T\right\|<\epsilon$ and $\left\|Tx_1-Sx_2\right\|<\epsilon$.

     This says that for a reflexive space $X$ and  a uniformly convex space $Y$, the pair $(X,Y)$ has a little weaker property than \emph{BPBp}. The only difference from the \emph{BPBp} and the above is approximating the image of a point if the given operator almost attains its norm.  Since the set of all norm attaining operators is dense in $\mathcal{L}(X,Y)$ for every $Y$ if $X$ is reflexive, the following result generalize the result mentioned above \cite{Kim}.

\begin{proposition}\label{prop}
Let $X$ be a Banach space and $Y$ be a uniformly convex space. Suppose that the set of norm attaining operators is dense in $\mathcal{L}(X,Y)$. Then, given $0<\epsilon<1$,  there exists $\eta(\eps)>0$ such that if $T\in S_{\mathcal{L}(X,Y)}$ and $x_1\in S_X$ satisfy $\|Tx_1\|>1-\eta(\eps)$, then there exist $S\in S_{\mathcal{L}(X,Y)}$ and $x_2\in S_X$ such that $\|Sx_2\|=1$, $\|S-T\|<\eps$ and $\|Tx_1-Sx_2\|<\eps$.
\end{proposition}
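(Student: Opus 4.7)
The plan is to combine the classical Bishop--Phelps--Bollob\'as theorem for functionals with a rank-one perturbation of $T$, use uniform convexity of $Y$ to control the image, and finally invoke the hypothesis that norm-attaining operators are dense in $\mathcal{L}(X,Y)$.

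Given $\eps>0$, choose $\eta>0$ small depending on $\eps$ and $\delta_Y$. By Hahn--Banach, pick $y^*\in S_{Y^*}$ with $y^*(Tx_1)=\|Tx_1\|$. Then $\phi:=y^*\circ T\in B_{X^*}$ satisfies $\phi(x_1)=\|Tx_1\|>1-\eta$, so after rescaling to $\phi/\|\phi\|\in S_{X^*}$ and applying the classical Bishop--Phelps--Bollob\'as theorem one obtains $\hat\phi\in S_{X^*}$ and $x_2\in S_X$ with $\hat\phi(x_2)=1$ and both $\|\hat\phi-\phi\|$ and $\|x_2-x_1\|$ of order $\sqrt\eta$.

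With $y_0:=Tx_1/\|Tx_1\|\in S_Y$ (so $y^*(y_0)=1$), introduce the rank-one perturbation
\[
\tilde T := T + (\hat\phi-\phi)\otimes y_0.
\]
A short calculation gives $\|\tilde T-T\|\leq\|\hat\phi-\phi\|$ and $y^*\circ\tilde T=\hat\phi$. In particular $y^*(\tilde T x_2)=\hat\phi(x_2)=1$, so $\|\tilde T x_2\|\geq 1$, while $\|\tilde T\|\leq 1+\|\hat\phi-\phi\|$. Uniform convexity of $Y$ now enters decisively: $y^*$ is a common near-norming functional of $Tx_1$ and $\tilde T x_2$, so the normalized vectors $Tx_1/\|Tx_1\|$ and $\tilde T x_2/\|\tilde T x_2\|$ each have $y^*$-value near $1$, forcing the norm of their midpoint to exceed $1-\delta_Y(\eps/2)$ provided $\eta$ was chosen sufficiently small. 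By the definition of the modulus of convexity this yields $\|Tx_1-\tilde T x_2\|<\eps/2$. To pass from the near-attainer $\tilde T$ to an exact norm-attainer, invoke density of norm-attaining operators in $\mathcal{L}(X,Y)$: pick $S_0$ arbitrarily close to $\tilde T$ attaining its norm and set $S:=S_0/\|S_0\|\in S_{\mathcal{L}(X,Y)}$, which satisfies $\|S-T\|<\eps$ by the triangle inequality.

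\textbf{Main obstacle.} The delicate point is matching the attaining point of $S$ to $x_2$ so that $\|Sx_2-Tx_1\|<\eps$, since density of norm-attaining operators alone provides no control over where the approximating operator attains. The plan to handle this is to iterate the rank-one-perturbation plus NA-density step: at each round, the common-norming-functional argument above, combined with $\delta_Y>0$, produces quantitatively tighter control on the image, so the successive operators and approximate attaining points form Cauchy sequences in $\mathcal{L}(X,Y)$ and $S_X$ respectively. Passing to the limit yields the desired $S\in S_{\mathcal{L}(X,Y)}$ attaining its norm at some $x_2\in S_X$ with $\|Sx_2-Tx_1\|<\eps$.
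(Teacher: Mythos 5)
Your first three steps (Hahn--Banach, the classical Bishop--Phelps--Bollob\'as theorem applied to $y^*\circ T$, and the rank-one correction $\tilde T = T+(\hat\phi-\phi)\otimes y_0$) are harmless but do not touch the real difficulty, which you correctly name yourself: once you invoke density of norm-attaining operators to replace $\tilde T$ by a norm-attainer $S_0$, you have no information about \emph{where} $S_0$ attains its norm. The point $x_2$ produced by the functional Bishop--Phelps--Bollob\'as theorem is a red herring, since the $x_2$ in the conclusion must be the actual attaining point of $S$, over which your construction exerts no control. The proposed repair by iteration does not close the gap: every round ends with another appeal to NA-density that again scrambles the attaining point, so there is no reason the successive attaining points form a Cauchy sequence; and even if the operators converged, a limit of norm-attaining operators need not attain its norm when $X$ is a general Banach space (this is precisely why Lindenstrauss-type iterations require reflexivity or the Radon--Nikod\'ym property on the domain, which is not assumed here).

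The missing idea is to make the perturbation \emph{large} and direction-selective, so that it forces every near-attaining point of the perturbed operator to be a good point. The paper takes $\tilde T_1 x = Tx+\eps_2\, y^*(Tx)\,Tx_1$ with $\eta=\eps_2^2$, so the perturbation has size $\eps_2=\sqrt{\eta}$, much larger than the NA-approximation error $\eps_2^2$ used afterwards. Then $\|\tilde T_1 x\|\le 1+\eps_2|y^*(Tx)|$ for $x\in B_X$, while $\|\tilde T_1 x_1\|\ge(1-\eps_2^2)(1+\eps_2(1-\eps_2^2))$; hence $\|\tilde T_1\|$ is essentially $1+\eps_2$ and can only be approximately achieved at points $z$ with $|y^*(Tz)|$ close to $1$. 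Consequently, if $T_1=\tilde T_1/\|\tilde T_1\|$, if $S$ attains its norm at $z$ and $\|S-T_1\|<\eps_2^2$, then automatically $|y^*(Tz)|>1-\delta_Y(\eps_1)$; choosing $\alpha=\pm1$ with $y^*T(\alpha z)=|y^*Tz|$ and setting $x_2=\alpha z$, the functional $y^*$ nearly norms both $Tx_1$ and $Tx_2$, so uniform convexity of $Y$ gives $\|Tx_1-Tx_2\|<\eps_1$, and $\|Sx_2-Tx_1\|<\eps$ follows from the triangle inequality. A single application of NA-density then suffices; no iteration and no functional-level Bishop--Phelps--Bollob\'as theorem are needed.
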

\begin{proof}
Let $\delta_Y(\cdot)$ be the modulus of convexity of $Y$ and $0<\eps_1<\eps$. Choose $\eps_2>0$ such that $(1-\eps_2^2)^3-2\eps_2-\eps_2^3>1-\delta_Y(\eps_1)$ and $\eps^2_2+2\eps_2+\eps_1<\eps$.

  We show that $\eta(\eps)=\eps_2^2$ is a suitable number. Assume $\|Tx_1\|>1-\eps_2^2$.
Choose $y^*\in S_{Y^*}$ such that $y^*Tx_1=\re y^*Tx_1>1-\eps_2^2$ and define an operator $\tilde{T}_1$ by
$$\tilde{T}_1x=Tx+\eps_2y^*(Tx)Tx_1~\text{for every}~x\in X.$$
It is easy to see that $1-\eps_2<(1-\eps_2^2)(1+\eps_2(1-\eps_2^2))\leq \|\tilde{T}_1x_1\|\leq\|\tilde{T}_1\|\leq 1+\eps_2$.

Let $T_1=\tilde{T}_1/\|\tilde{T}_1\|$. Since the set of norm attaining operators is dense in $\mathcal{L}(X,Y)$, there exist an operator $S$ and $z\in S_X$ such that $\|T_1-S\|<\eps_2^2$ and $\|Sz\|=\|S\|=1$.
Since $\|Sz-T_1z\|<\eps_2^2$, we see that $\|T_1z\|>1-\eps_2^2$, which means that
 $$||Tz+\eps_2y^*(Tz)Tx_1||>(1-\eps_2^2)\|\tilde{T}_1\|>(1-\eps_2^2)(1-\eps_2^2)(1+\eps_2(1-\eps_2^2)).$$
 Hence, we have $|y^*(Tz)|>(1-\eps_2^2)^3-2\eps_2-\eps_2^3>1-\delta_Y(\eps_1)$. Choose $\alpha=\pm 1$ satisfying $y^*T(\alpha z) = |y^*T(z)|$ and  let $x_2 = \alpha z$. Then
 \[\left\|  \frac{Tx_1 + Tx_2 }{2}\right\| \ge   \frac{y^*Tx_1 + y^*Tx_2 }{2}>1-\delta_Y(\epsilon_1).\]
Hence, we see that $\|Tx_1-Tx_2\|<\eps_1$.
Moreover,
  \begin{eqnarray*}
\|Sx_2-Tx_1\|
&\leq &\|Sx_2-T_1x_2\|+\|T_1x_2-\tilde{T}_1x_2\|+\|\tilde{T}_1x_2-Tx_2\|+\|Tx_2-Tx_1\|\\
&\leq&\|S-T_1\|+|\|\tilde{T}_1\|-1|+\eps_2+\eps_1\\
&<& \eps^2_2+\eps_2+\eps_2+\eps_1<\eps.
   \end{eqnarray*} This completes the proof.
\end{proof}

Now we state the main theorem of this paper.

\begin{theorem}\label{thm}
Let $X$ be a uniformly convex space and $K$ be a compact Hausdorff space. Then the pair $(C(K), X)$ has the BPBp.
\end{theorem}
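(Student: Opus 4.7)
The plan is to iterate Proposition~\ref{prop} together with a pointwise ``clipping'' of the current function, and pass to a limit to obtain a norm-attaining operator at a point close to $f_0$.

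Since $X$ is uniformly convex hence reflexive, every operator in $\mathcal{L}(C(K),X)$ is weakly compact, so by Schachermayer's theorem the norm-attaining operators are dense in $\mathcal{L}(C(K),X)$. Thus Proposition~\ref{prop} applies: from $\|Tf_0\|>1-\eta$ one extracts a norm-attaining $\hat S\in S_{\mathcal{L}(C(K),X)}$ at some $\hat g\in S_{C(K)}$, with $\|\hat S-T\|<\tilde\eps$ and $\|\hat S\hat g-Tf_0\|<\tilde\eps$. The missing ingredient for the BPBp is sup-norm closeness of $\hat g$ to $f_0$ in $C(K)$; this is bridged by the following $C(K)$-specific clipping observation: if $\mu\in S_{M(K)}$ and $f\in B_{C(K)}$ satisfy $\int f\,d\mu\geq 1-\delta$, then the continuous function $f'(t):=\max(-1,\min(1,f(t)/(1-\alpha)))$ lies in $S_{C(K)}$, satisfies $\|f'-f\|_\infty\leq\alpha$, and $\int f'\,d\mu\geq 1-2\delta/\alpha$, since the Hahn decomposition of $\mu$ forces $|\mu|$ to be concentrated on $\{|f|\geq 1-\alpha\}$ up to a set of mass $\leq\delta/\alpha$, where $f'$ equals $\pm1$ with the correct sign.

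Build inductively a sequence $(T_n,f_n)$ with $\|T_n\|=1$, $\|T_nf_n\|\geq 1-\delta_n$, $f_n\in S_{C(K)}$, starting from $(T_0,f_0):=(T,f_0)$ and $\delta_0:=\eta$. At step $n$, apply Proposition~\ref{prop} with parameter $\tilde\eps_n$ (chosen so that $\delta_n<\eta(\tilde\eps_n)$) to get $\hat S_n\in S_{\mathcal{L}(C(K),X)}$ and $\hat g_n\in S_{C(K)}$ with $\|\hat S_n-T_n\|<\tilde\eps_n$, $\|\hat S_n\hat g_n\|=1$, and $\|\hat S_n\hat g_n-T_nf_n\|<\tilde\eps_n$. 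Choose $y_n^*\in S_{X^*}$ norming $\hat S_n\hat g_n$ and set $\mu_n:=\hat S_n^*y_n^*\in S_{M(K)}$; the triangle inequality gives $\|\hat S_nf_n-\hat S_n\hat g_n\|\leq 2\tilde\eps_n$, hence $\int f_n\,d\mu_n=y_n^*(\hat S_nf_n)\geq 1-2\tilde\eps_n$. Applying the clipping observation with parameter $\alpha_n$ yields $f_{n+1}$ with $\|f_{n+1}-f_n\|_\infty\leq\alpha_n$ and $\|\hat S_nf_{n+1}\|\geq\int f_{n+1}\,d\mu_n\geq 1-4\tilde\eps_n/\alpha_n$. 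Set $T_{n+1}:=\hat S_n$ and $\delta_{n+1}:=4\tilde\eps_n/\alpha_n$.

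The main obstacle is scheduling $(\tilde\eps_n)$ and $(\alpha_n)$ so that $\sum\tilde\eps_n<\epsilon/2$, $\sum\alpha_n<\epsilon/2$, $\delta_n\to 0$, and $\delta_n<\eta(\tilde\eps_n)$ at every step simultaneously; since Proposition~\ref{prop}'s modulus $\eta(\cdot)$ is controlled quantitatively by the modulus of convexity of $X$, this requires a careful balancing of the decay rates. Granting such schedules, $(T_n)$ and $(f_n)$ are Cauchy, converging to $T_\infty\in\mathcal{L}(C(K),X)$ and $f_\infty\in C(K)$ respectively, with $\|T-T_\infty\|<\epsilon$ and $\|f_0-f_\infty\|_\infty<\epsilon$. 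The bounds $\|T_nf_n\|\geq 1-\delta_n\to 1$ and $\|T_n\|=1$ force $\|T_\infty f_\infty\|=\|T_\infty\|=\|f_\infty\|_\infty=1$ in the limit, so setting $(S,g_0):=(T_\infty,f_\infty)$ completes the proof.
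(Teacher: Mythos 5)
Your strategy --- iterate Proposition~\ref{prop} and repair the point by clipping --- cannot be scheduled, and this is a fatal gap rather than a bookkeeping detail to be ``granted''. Quantify the two competing losses. Any modulus $\eta$ produced by the proof of Proposition~\ref{prop} satisfies $\eta(\eps)=\eps_2^2$ with $\eps_2^2+2\eps_2+\eps_1<\eps$, so in particular $\eta(\eps)<\eps$ (in fact $\eta(\eps)<\eps^2/4$, and it is further depressed by $\delta_Y$). Your clipping step converts a defect of order $\tilde\eps_n$ into the new defect $\delta_{n+1}=4\tilde\eps_n/\alpha_n$, and since $\sum_k\alpha_k<\eps/2$ forces $\alpha_n<1$, you always have $\delta_{n+1}>4\tilde\eps_n$. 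Your own requirement $\delta_{n+1}<\eta(\tilde\eps_{n+1})$ then gives
\[
\tilde\eps_{n+1}\;>\;\eta(\tilde\eps_{n+1})\;>\;\delta_{n+1}\;=\;\frac{4\tilde\eps_n}{\alpha_n}\;>\;4\tilde\eps_n,
\]
so every admissible schedule satisfies $\tilde\eps_n>4^n\tilde\eps_0\to\infty$, which is incompatible with $\sum_n\tilde\eps_n<\eps/2$. The obstruction is structural: an application of Proposition~\ref{prop} with accuracy $\tilde\eps$ tolerates only a defect $\eta(\tilde\eps)<\tilde\eps$, while the clipping hands back a defect strictly larger than $\tilde\eps$; the loop loses ground at every turn no matter how the parameters decay. (Your clipping observation itself is correct up to constants, and the limiting argument at the end would be fine if the schedule existed; the sole, but fatal, problem is that it does not.)

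The missing idea is to avoid iteration entirely by modifying the \emph{operator}, not only the point. The paper first restricts $T$ to a compact set $A=A_1\cup A_2$ on which $f_0$ is within $\eps/2$ of $\pm1$ and on which the representing measure has the matching signs; Lemmas~\ref{uniform1} and~\ref{uniform2} show this restriction costs only $\eps/6$ in operator norm. It then applies Proposition~\ref{prop} exactly \emph{once}, on $C(A)$ at the sign function $f_1$ (equal to $1$ on $A_1$ and $-1$ on $A_2$), obtaining $S_2$ attaining its norm at some $f_2$, and finally flips the sign of the representing measure of $S_2$ on the set where $f_1f_2<0$. The modified operator $S$ attains its norm at $f_3$, which has the same sign pattern as $f_1$; since $\|2f_3-f_1|_A\|\le1$ and $Sf_3$ is the midpoint of $S(f_1|_A)$ and $S(2f_3-f_1|_A)$, strict convexity forces $S(f_1|_A)=Sf_3$, so $S$ already attains its norm at a function uniformly close to $f_0$ on $A$, and a Urysohn gluing of $f_1$ with $f_0$ finishes the proof. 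Because Proposition~\ref{prop} is invoked only once, the quantitative bookkeeping closes --- which is precisely what your iterative scheme cannot achieve.
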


 Before we present the proof of the main result, we begin with preliminary comments on vector measure and two lemmas.
Recall that a vector measure $G:\Sigma\to X$ on a $\sigma$-algebra $\Sigma$ is said to be countably additive if, for every mutually disjoint sequence of $\Sigma$-measurable subsets $\{A_i\}_{i=1}^\infty$, we have
\[ G\left(\bigcup_{i=1}^\infty A_i\right) = \sum_{i=1}^\infty G(A_i).\]
For a $\Sigma$-measurable subset $A$, the semi-variation $\|G\|(A)$ of $G$  is defined by
\[ \|G\|(A) = \sup\{ |x^*G|(A) : x^*\in B_{X^*}\},\] where $|x^*G|(A)$ is the total variation of the scalar-valued countably additive measure $x^*G$ on $A$. The vector measure $G$ on a Borel $\sigma$-algebra is said to be regular if for each Borel subset $E$ and $\epsilon>0$ there exists a compact subset $K$ and an open set $O$ such that
\[ K\subset E \subset O \ \ \ \ \text{and}\ \ \ \ \|G\|(O\setminus K)<\epsilon.\]
It is well known that if $X$ is reflexive, each operator $T$ in $\mathcal{L}(C(K),X)$ has a $X$-valued countably additive representing Borel measure $G$ and the measure is regular (see \cite[VI. Theorem 1, 5 and Corollary 14]{DU} for a reference). That is, for all $f\in C(K)$ and $x^*\in X^*$, we have
\[ Tf = \int_K f\, dG, \ \  \ x^*T(f) = \int_K f \, d^*G \ \ \ \text{and} \ \ \  \|T\| = \|G\|(K).\]
If $G$ is a countably additive representing measure for an operator $T$ in   $\mathcal{L}(C(K),X)$, then it is easy to see that for any bounded Borel measurable function  $h:K \to \mathbb{R}$, the mapping $S$, defined by $Sf=\int f hdG$, is a bounded linear operator and $\|S\|\leq\|T\|\cdot \|h\|_\infty$, where $\|h\|_\infty = \sup\{ |h(k)|: k\in K\}$.

\begin{lemma}\label{uniform1} Let  $G$ be a countably additive, Borel regular $X$-valued vector measure on a compact Hausdorff space $K$ with $\|G\|(K)=1$ and let  $0<\eta,\gamma<1$. Assume that $f\in S_{C(K)}$ and $x^*\in S_{X^*}$ satisfy  $$\int_K f \, dx^*G>1-\eta.$$
Then, we have \[|x^*G|\big(K\setminus \left(A^+_{\gamma}\cup A^-_{\gamma}\right)\big)<2{{\eta}\over{\gamma}}+\eta,\] where
$A^+_{\gamma}=\{t\in K~|~f(t)\geq1-\gamma\}$ and  $A^-_{\gamma}=\{t\in K~|~f(t)\leq-1+\gamma\}$. Moreover, there exist mutually disjoint compact sets $F^+, F^-$ such that $x^*G$ is positive on $F^+$, negative on $F^-$ and $$\int_{(F^+\cap A^+_{\gamma})\cup(F^-\cap A^-_{\gamma})} f\, dx^*G>1-4{{\eta}\over{\gamma}}.$$
\end{lemma}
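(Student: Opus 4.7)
The plan is to set $\mu := x^*G$ and work with this signed regular Borel measure on $K$. Since $|x^*G|(A) \leq \|G\|(A)$ for every Borel $A$, the regularity of $G$ passes to $\mu$ and hence to the positive measure $|\mu|$; in particular $|\mu|(K) \leq 1$. I would take the Hahn decomposition $K = P \sqcup N$ of $\mu$, so that $d\mu = d|\mu|$ on $P$ and $d\mu = -d|\mu|$ on $N$, and partition $K$ into the six Borel pieces obtained by intersecting $\{P,N\}$ with $\{A^+_\gamma, B, A^-_\gamma\}$, where $B := K\setminus(A^+_\gamma\cup A^-_\gamma)$.

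For the first inequality I would bound $\int_K f\,d\mu$ piece by piece using $|f|\le 1$ on $K$, $|f|<1-\gamma$ on $B$, $f \geq 1-\gamma$ on $A^+_\gamma$, and $f \leq -(1-\gamma)$ on $A^-_\gamma$. Concretely, $f\,d\mu$ is bounded above by $d|\mu|$ on the aligned pieces $P \cap A^+_\gamma$ and $N \cap A^-_\gamma$, by $(1-\gamma)\,d|\mu|$ on the middle pieces $P \cap B$ and $N \cap B$, and by $-(1-\gamma)\,d|\mu|$ on the anti-aligned pieces $P \cap A^-_\gamma$ and $N \cap A^+_\gamma$. Summing and using $|\mu|(K) \leq 1$ gives
\[\int_K f\,d\mu \;\leq\; 1 - \gamma\,|\mu|(B) - (2-\gamma)\bigl(|\mu|(P \cap A^-_\gamma) + |\mu|(N \cap A^+_\gamma)\bigr),\]
which, compared with the hypothesis $\int_K f\,d\mu > 1-\eta$, yields the stronger estimate $|\mu|(B) < \eta/\gamma$ (so certainly $< 2\eta/\gamma + \eta$) together with the side bound $|\mu|(P \cap A^-_\gamma) + |\mu|(N \cap A^+_\gamma) < \eta/(2-\gamma) \leq \eta$, which is precisely what the next step will need.

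For the second assertion, I would invoke inner regularity of $|\mu|$ to pick, for a small parameter $\beta>0$, compact sets $F^+ \subset P$ and $F^- \subset N$ with $|\mu|(P \setminus F^+) < \beta$ and $|\mu|(N \setminus F^-) < \beta$; these are automatically disjoint, and $\mu$ is positive on $F^+$ and negative on $F^-$ by construction. Setting $E := (F^+ \cap A^+_\gamma) \cup (F^- \cap A^-_\gamma)$, a case check on $P$ and $N$ shows $K \setminus E \subset B \cup (P \cap A^-_\gamma) \cup (N \cap A^+_\gamma) \cup (P \setminus F^+) \cup (N \setminus F^-)$, so the bounds above give $|\mu|(K \setminus E) < \eta/\gamma + \eta/(2-\gamma) + 2\beta$. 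Then
\[\int_E f\,d\mu \;=\; \int_K f\,d\mu - \int_{K\setminus E} f\,d\mu \;>\; (1-\eta) - \bigl(\eta/\gamma + \eta/(2-\gamma) + 2\beta\bigr),\]
and since $0 < \gamma < 1$ forces $\eta + \eta/(2-\gamma) \leq 2\eta/\gamma$, choosing $\beta < \eta/(2\gamma)$ makes this lower bound strictly exceed $1 - 4\eta/\gamma$.

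The only genuine obstacle is the bookkeeping for the six-piece estimate in the first step; the remaining ingredients---regularity of the scalar measure $x^*G$ and its variation, the Hahn decomposition, and inner approximation by compacta---are standard consequences of the vector-measure setup recalled just before the lemma.
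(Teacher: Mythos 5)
Your proof is correct and follows essentially the same route as the paper's: a Hahn decomposition of $x^*G$, a splitting of $\int_K f\,dx^*G$ according to the sets where $f$ is within $\gamma$ of $\pm 1$, and regularity to replace the Hahn sets by disjoint compacta. The only difference is bookkeeping — you run the integral estimate on the exact Hahn sets $P,N$ and pass to compacts last with a free parameter $\beta$, whereas the paper fixes compacts $F^\pm$ with $\|G\|(K\setminus(F^+\cup F^-))<\eta$ at the outset; your ordering even yields the slightly sharper bound $|x^*G|(K\setminus(A^+_\gamma\cup A^-_\gamma))<\eta/\gamma$.
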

\begin{proof}
The Hahn decomposition of $x^*G$ and the regularity of $G$ show that there exist mutually disjoint compact sets $F^+$, $F^-$ such that  $x^*G$ is positive on $F^+$, negative on $F^-$ and $\|G\|\big(K\setminus (F^+\cup F^-)\big)<\eta$.
\begin{eqnarray*}
 1-\eta
 &\leq&\int_K f\, d x^*G = \int_{F^+} f d x^*G+ \int_{F^-} f\,d x^*G+\int_{K\setminus  (F^+\cup F^-)}f\,dx^*G\\
 &=& \int_{F^+\cap A^+_{\gamma}} f\, d x^*G+ \int_{F^+\setminus A^+_{\gamma}}f\, d x^*G+\int_{F^-\cap A^-_{\gamma}} f \,d x^*G+ \int_{F^-\setminus A^-_{\gamma}}f\, d x^*G+\int_{K\setminus  (F^+\cup F^-)}f\,dx^*G\\
 &\leq& x^*G(F^+\cap A^+_{\gamma})+(1-\gamma)x^*G(F^+\setminus A^+_{\gamma})-x^*G(F^-\cap A^-_{\gamma})-(1-\gamma)x^*G(F^-\setminus A^-_{\gamma})+\eta\\
 &=&x^*G(F^+)-x^*G(F^-)-\gamma(x^*G(F^+\setminus A^+_{\gamma})-x^*G(F^-\setminus A^-_{\gamma}))+\eta.
 \end{eqnarray*}
 Since
 $ x^*G(F^+)-x^*G(F^-)= |x^*G|(F^+\cup F^-) \le \|G\|(K)= 1$,
 we get
 $$|x^*G|((F^+\setminus A^+_{\gamma}) \cup (F^-\setminus A^-_{\gamma}))= x^*G(F^+\setminus A^+_{\gamma})-x^*G(F^-\setminus A^-_{\gamma})\leq 2{{\eta}\over{\gamma}}.$$
 This shows that
\begin{align*}
|x^*G|\big(K\setminus \left(A^+_{\gamma}\cup A^-_{\gamma}\right)\big)  & \leq  |x^*G|\big(K\setminus (F^+\cup F^-)) + |x^*G|\big(F^+\cup F^-) \setminus  \left(A^+_{\gamma}\cup A^-_{\gamma}\right)\big)\\
&\leq  \|G\|\big(K\setminus (F^+\cup F^-)) + |x^*G|\big( (F^+\setminus A^+) \cup ( F^-\setminus  A^-_{\gamma})\big)\\
&<2{{\eta}\over{\gamma}}+\eta
\end{align*}
 and
\begin{align*}
\int_{(F^+\cap A^+_{\gamma})\cup(F^-\cap A^-_{\gamma})} f\, dx^*G &= \int_{F^+\cup F^-} f\, dx^*G - \int_{(F^+\setminus A^+_{\gamma})\cup(F^-\setminus A^-_{\gamma})} f\, dx^*G
\\
&\geq \int_{K} f\, dx^*G - \|G\|(K\setminus (F^+\cup F^-)) - |x^*G|((F^+\setminus A^+_{\gamma}) \cup (F^-\setminus A^-_{\gamma}))\\
&>1-2\eta-2{{\eta}\over{\gamma}}>1-4{{\eta}\over{\gamma}}.
\end{align*} This completes the proof.
\end{proof}

\begin{lemma}\label{uniform2}Let $X$ be a uniformly convex space with the modulus of convexity $\delta_X$ and $T\in S_{\mathcal{L}(C(K),X)}$ be an operator represented by the countably additive, Borel regular vector measure $G$.  Let $0<\epsilon<1$ and $A$ be a Borel set of $K$.  Suppose that an operator $S$, defined by $Sf=\int_A fdG$, satisfies $\|S\|>1-\delta_X(\epsilon).$ Then $$\|T-S\|=\sup_{f\in B_{C(K)}}\left\|\int_{K\setminus A}fdG\right\|<\epsilon.$$
\end{lemma}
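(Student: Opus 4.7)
The plan is to exploit uniform convexity by constructing, for an arbitrary $f_2\in B_{C(K)}$, two continuous functions $g_\pm\in B_{C(K)}$ whose $T$-images have norm at most one, average to something close to a near-norm-attaining value of $S$, and differ by roughly $2(T-S)f_2$. Once this is arranged, uniform convexity alone will finish the argument; the rest of the work is purely to get continuous representatives of $\chi_A f_1$ and $(1-\chi_A)f_2$.

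First I would pick $f_1\in B_{C(K)}$ with $\|Sf_1\|>1-\delta_X(\epsilon)$, which exists by hypothesis. The natural separator between $S$ and $T-S$ is the characteristic function $\chi_A$, but $\chi_A$ need not be continuous, so I would replace it by a continuous surrogate: for a prescribed $\eta>0$, I would use Borel regularity of $G$ to pick a compact set $F\subset A$ and an open set $U\supset A$ with $\|G\|(U\setminus F)<\eta$, and then invoke Urysohn's lemma to obtain a continuous $\varphi\colon K\to[0,1]$ with $\varphi\equiv 1$ on $F$ and $\varphi\equiv 0$ on $K\setminus U$.

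Next I would set $g_\pm := \varphi f_1 \pm (1-\varphi)f_2$; both lie in $B_{C(K)}$, so $\|Tg_\pm\|\le 1$. Splitting the integral $Tg_\pm = \int_K g_\pm\,dG$ over $F$, $U\setminus F$, and $K\setminus U$ and using $\|G\|(U\setminus F)<\eta$ (together with $\|G\|(A\setminus F)\le \|G\|(U\setminus F)$ and $\|G\|(U\setminus A)\le\|G\|(U\setminus F)$), one gets
\[
\left\|\tfrac12(Tg_+ + Tg_-) - Sf_1\right\|\le 2\eta,\qquad \left\|\tfrac12(Tg_+ - Tg_-) - (T-S)f_2\right\|\le 2\eta.
\]
For $\eta$ small enough this makes $\|(Tg_+ + Tg_-)/2\| > 1-\delta_X(\epsilon)$, and since $\|Tg_\pm\|\le 1$ the definition of $\delta_X$ forces $\|Tg_+ - Tg_-\|<\epsilon$. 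Combined with the second estimate, this yields $\|(T-S)f_2\|<\epsilon/2+2\eta$; letting $\eta\to 0$ gives $\|(T-S)f_2\|\le\epsilon/2<\epsilon$, and $f_2$ being arbitrary finishes the proof.

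The delicate point is the simultaneous approximation: a single Urysohn function $\varphi$ and a single regularity witness $F\subset A\subset U$ must control both the average $(Tg_++Tg_-)/2$ against $Sf_1$ and the difference $(Tg_+-Tg_-)/2$ against $(T-S)f_2$, with the semi-variation $\|G\|(U\setminus F)$ bounding both errors. Once this is arranged, the uniform convexity of $X$ closes the argument in one stroke.
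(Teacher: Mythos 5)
Your proof is correct and follows essentially the same strategy as the paper: build a symmetric pair $g_\pm\in B_{C(K)}$ whose $T$-images average to a near-norm-attaining value of $S$ and differ by (essentially) $2(T-S)f_2$, then invoke uniform convexity. The only difference is technical --- the paper integrates only over a compact set $A_1\cup B$ and uses Tietze extension to make the two identities exact, whereas you use a Urysohn function and absorb the error on $U\setminus F$ by letting $\eta\to 0$; both implementations work.
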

\begin{proof}
Choose $x^*\in S_{X^*}$, $f_0\in S_{C(K)}$ such that $\|Sf_0\|=x^*Sf_0 > 1-\delta_X(\eps)$.
By the regularity of $G$, we may choose a compact set $A_1\subset A$ such that $$\int_{A_1} f_0dx^*G >1-\delta_X(\epsilon).$$
Fix a closed set $B \subset{K\setminus A}$ and $g\in B_{C(B)}$. Then, choose $g_+$, $g_-\in B_{C(K)}$ satisfying
\begin{align*}
    g_+(t)&=g_-(t)=f_0(t)\ \ \text{for}\  t\in A_1 \ \ \text{and}\\
   g_+(t)&=- g_-(t)=g(t)\ \  \text{for}\ t\in B.
\end{align*}
So, we have
$$1-\delta_X(\epsilon)<\int_{A_1} f_0 dx^*G\leq\left\|\int_{A_1} f_0 dG\right\|=\frac 12\left\|{{\int_{A_1\cup B}g_+ dG+ \int_{A_1 \cup B}g_- dG}}\right\|.$$
Note that $\left\|\int_{A_1\cup B}g_+ dG\right\|,\left\|\int_{A_1\cup B}g_+ dG\right\|\leq 1$. Thus, from the uniform convexity of $X$, we get that $$\left\|2\int_{B}g dG\right\|=\left\|\int_{A_1 \cup B}g_+ dG-\int_{A_1 \cup B}g_- dG\right\|<\epsilon.$$
This implies $\|T-S\|<\epsilon$ and the proof is done.
\end{proof}

\begin{proof}[Proof of Theorem~\ref{thm}]
Let $\delta_X$ be the modulus of convexity for $B_X$. Fix $0<\epsilon<{{1}\over{2^8}}$ and let $\eta$ be the function which appears in Proposition \ref{prop} for the pair $(C(K), X)$, and let $\gamma(t)=\min\left\{\eta(t),\delta_X(t),{{t}\over{3}}\right\}$ for $t \in (0,1)$. Assume that $T\in S_{\mathcal{L}(C(K), X)}$ and $f_0\in S_{C(K)}$ satisfy that $$\|Tf_0\|>1-{{\epsilon}\over{8}}\gamma\left({{\epsilon}\over{6}}\delta_X\left({{\epsilon}\over{6}}\right)\right).$$
Let $G$ be the representing vector measure for $T$ which is countably additive Borel regular on $K$.
 Choose $x_1^*\in S_{X^*}$ such that $x_1^*Tf_0>1-{{\epsilon}\over{8}}\gamma\left({{\epsilon}\over{6}}\delta_X\left({{\epsilon}\over{6}}\right)\right)$.  Bt Lemma \ref{uniform1} there exist two mutually disjoint compact sets $F^+, F^-$ such that $x^*G$ is positive on $F^+$, negative on $F^-$ and $$\int_{(F^+\cap A^+_{\epsilon/2})\cup(F^-\cap A^-_{\epsilon/2})} f \, dx^*G>1-\gamma\left({{\epsilon}\over{6}}\delta_X\left({{\epsilon}\over{6}}\right)\right),$$ where $A^+_{\epsilon/2}=\{t\in K~|~f_0(t)\geq1-\frac\epsilon2\}$ and $A^-_{\epsilon/2}=\{t\in K~|~f_0(t)\leq-1+\frac\epsilon2\}$.

Let $A_1 = F^+\cap A^+_{\epsilon/2}$, $A_2= F^-\cap A^-_{\epsilon/2}$ and $A=A_1\cup A_2$. Then, define $S_1\in B_{\mathcal{L}(C(K), X)}$ by $S_1f=\int_A f dG$ for every $f\in C(K)$.
Then Lemma \ref{uniform2} shows that $\|T-S_1\|<{{\epsilon}\over{6}}$.
Choose $f_1\in S_{C(K)}$ such that
\begin{align*}
    f_1(t)&= 1\ \ \ \ \text{for}\  \ t\in A_1\ \ \ \  \text{and}\\
   f_1(t)&=-1 \ \ \text{for}\ \ t\in A_2.
\end{align*}
For $f\in C(K)$, the restriction of $f$ to $A$ will be denoted by $f|_A$. Now consider $S_1$ as an operator in $\mathcal{L}(C(A),X)$. Then we have
$$\|S_1(f_1|_A)\|>1-\gamma\left({{\epsilon}\over{6}}\delta_X\left({{\epsilon}\over{6}}\right)\right),$$
So Proposition~\ref{prop}  shows that   there exist $S_2\in S_{\mathcal{L}(C(A),X)}$ and $f_2\in S_{C(A)}$ such that $\|S_2f_2\|=1$, $\left\|S_2-{{S_1}\over{\|S_1\|}}\right\|<{{\epsilon}\over{6}}\delta_X\left({{\epsilon}\over{6}}\right)$ and $\left\|S_2f_2-{{S_1(f_1|_A)}\over{\|S_1\|}}\right\|<{{\epsilon}\over{6}}\delta_X\left({{\epsilon}\over{6}}\right)$.
Let $G'$ be the representing vector measure for $S_2$ which is  countably additive Borel regular on $A$. Choose $x^*_2\in S_{X^*}$ so that $x^*_2S_2f_2=\|S_2f_2\|=\int_{A}f_2\, dx^*_2G'=1$.

Since
\begin{align*}
x^*_2S_2(f_1|_A+f_2)
&\geq 2x^*_2S_2f_2-\left\|S_2f_2-S_2(f_1|_A)\right\|\\
&\geq 2-\left\|S_2f_2-{{S_1(f_1|_A)}\over{\|S_1\|}}\right\|-\left\|{{S_1(f_1|_A)}\over{\|S_1\|}}-S_2(f_1|_A)\right\|\\
&>2\left(1-{{\epsilon}\over{6}}\delta_X\left({{\epsilon}\over{6}}\right)\right),
\end{align*}
we get
\[\int_{A}{{f_1+f_2}\over{2}}dx^*G'>1-{{\epsilon}\over{6}}\delta_X\left({{\epsilon}\over{6}}\right).\] By applying Lemma~\ref{uniform1} again, we get a compact subset $F$ of $A$ such that
\[ F \subset \left\{t\in A : \left| {f_1(t)+f_2(t)}\right|> 2(1-\epsilon) \right\} \] and
 $$\left\|\int_{F}{{f_1+f_2}\over{2}}\, dG'\right\|>1-\delta_X\left({{\epsilon}\over{6}}\right).$$
Let $B= \left\{t\in A :  f_1(t)f_2(t)\ge 0 \right\}$. Then,  $F\subset B$ and
 $$ \sup_{f\in B_{C(A)} } \left\|\int_B f \, dG'  \right\| \geq\left\|\int_{F}{{f_1+f_2}\over{2}}\, dG'\right\|>1-\delta_X\left({{\epsilon}\over{6}}\right).$$ By Lemma~\ref{uniform2}, we have
\[\sup_{f\in B_{C(K)}}\left\|\int_{A\setminus B}f\, dG'\right\|<{{\epsilon}\over{6}}.\]
Define $S\in {\mathcal{L}(C(A),X)}$ by, for $f\in C(A)$,
\[Sf=\int_{B}f\, dG'-\int_{A\setminus B}f\, dG'\]  and let
\[
f_3 = \left\{
  \begin{array}{ll}
   \ |f_2|& \text{for}\  t\in A_1,\\
   -|f_2|& \text{for}\  t\in A_2.
  \end{array}
\right.
\] So $f_3\in C(A)$ and $f_3 = f_2 \chi_{B} - f_2 \chi_{A\setminus B}$, where $\chi_S$ is the characteristic function on a set $S$. Hence we have  $Sf_3=S_2f_2$,  $\|Sf_3\|=\|S\|=1$ and $\|S-S_2\|<{{\epsilon}\over{3}}$. On the other hand, we have $\|2f_3-f_1|_A\|\leq 1$. Since $X$ is uniformly convex and we have
$Sf_3={{S(f_1|_A)+S(2f_3-f_1|_A)}\over{2}}$, we get
\[Sf_3=S(f_1|_A)=S(2f_3-f_1|_A).\]

We now consider $S_1, S_2, S$ as operators in $\mathcal{L}(C(K),X)$ using the canonical extension. That is, $S(f) = S(f|_A)$, $S_i(f)=S_i(f|_A)$ for all $f\in C(K)$ and for $i=1,2$. Let $C$ be the compact subset defined by $$C=\{t\in K : |f_1(t)-f_0(t)|\ge \epsilon\}.$$   Note that $A$ and $C$ are mutually disjoint. Indeed, if $t\in A$, then $|f_0(t) - f_1(t)|\le \epsilon/2$. So there is $\phi \in C(K)$ such that $0\leq \phi \leq 1$, $\phi(k)=1$ for $k\in A$ and $\phi(k)=0$ for $k\in C$. Let $g=\phi f_1+(1-\phi)f_0$. Then we see that $\|Sg\|=1$,
  \begin{eqnarray*}
  \|S-T\|
  &\leq&\|S-S_2\|+\|S_2-{{S_1}\over{\|S_1\|}}\|+\|{{S_1}\over{\|S_1\|}}-S_1\|+\|S_1-T\|\\
  &<&{{\epsilon}\over{3}}+{{\epsilon}\over{6}}+{{\epsilon}\over{3}}+{{\epsilon}\over{6}}=\epsilon
  \end{eqnarray*}
and $\|g-f_0\|= \sup_{k\in K\setminus C}|\phi(k)(f_1(k)-f_0(k))|< \epsilon$. This completes the proof.
\end{proof}

\end{document}